\documentclass[12pt,a4paper]{scrartcl}
\usepackage[utf8]{inputenc}
\usepackage{nameref}
\usepackage[colorlinks=true, urlcolor=blue, citecolor=blue, linkcolor=blue]{hyperref}
\usepackage{amsfonts, amsmath, amssymb, amsthm, thmtools}
\usepackage{cleveref}

\PassOptionsToPackage{usenames,dvipsnames,svgnames}{xcolor}

\declaretheorem[name=Theorem, refname={Theorem, Theorems}, Refname={Theorem, Theorems}, parent=section]{theorem}
\declaretheorem[name=Assumption, refname={assumption, assumptions}, Refname={Assumption, Assumptions}, sibling=theorem]{assumption}

\declaretheorem[name=Definition, refname={definition, definitions}, Refname={Definition, Definition}, sibling=theorem, style=definition]{definition}
\declaretheorem[name=Example, refname={example, examples}, Refname={Example, Examples}, sibling=theorem, style=definition]{example}
\declaretheorem[name=Lemma, refname={lemma, lemmas}, Refname={Lemma, Lemmas}, sibling=theorem]{lemma}

\usepackage{color, enumitem, fancyhdr, float, layout, multirow, setspace, verbatim}
\usepackage{caption, subcaption}
\usepackage[authoryear]{natbib}
\usepackage{authblk}

\usepackage{graphicx}
\graphicspath{{figures/}{../figures/}}

\usepackage[a4paper, margin=2.5cm]{geometry}
\def\Re{\mathbb{R}}

\def\E{\mathbb{E}}

\def\I{\mathbb{I}}
\def\N{\mathbb{N}}
\def\P{\mathbb{P}}

\def\I{{\mathbb I}}

\def\E{{\textbf{E}}}

\def\sP{{\mathcal P}}
\def\P{{\mathbb P}}

\def\Re{{\mathbb R}}

\def\sP{\mathcal{P}}

\def\t0{{\theta_0}}

\title{Posterior Invariance of Multiplicative Contrasts under Margin Constraints in Contingency Tables}
\author[3]{Rafael B. Stern}
\author[1]{Ruobin Gong}
\author[2]{Joseph B. Kadane}
\author[2]{Mark J. Schervish}
\author[2]{Teddy Seidenfeld}

\affil[3]{Department of Statistics, University of S\~{a}o Paulo}
\affil[1]{Department of Statistics, Rutgers University}
\affil[2]{Department of Statistics \& Data Science, Carnegie Mellon University}
\date{\today}

\begin{document}
\maketitle

\begin{abstract}
 \textbf{Abstract}: Measures of association in contingency tables, such as odds ratios and their generalizations, are often studied under different sampling schemes that either fix or leave random the margins of the table. While classical results show that certain odds ratios are unaffected by constraining the margins, it is less clear when this invariance holds more generally. This paper studies posterior inference for a broad class of multiplicative contrasts of multinomial cell probabilities, which we refer to as generalized odds ratios, and addresses exactly when fixing a margin alters inference about them. We consider Bayesian inference under multinomial sampling and under models in which partition sums of the table are fixed in advance, and assume that the marginal and conditional parameters are independent a priori. Under additional mild assumptions, we show that the posterior distribution of a generalized odds ratio is invariant to fixing a margin if and only if the coefficients defining the contrast sum to zero within the margin.
\end{abstract}
\noindent\textbf{Keywords:} Odds ratio; posterior invariance; margin constraints; multiplicative contrasts; generalized odds ratio.

\section{Introduction}

A contingency table is a tabular representation of the joint frequency distribution of categorical variables \citep{Agresti2018introduction}. It organizes data into rows and columns, where each cell represents the count of observations that fall into the corresponding combination of categories. \Cref{tab:contingecy} provides a representation of contingency tables for variables $Y$ and $Z$. The right table illustrates the special case of a $2\times2$ contingency table, where both variables are binary.
\begin{figure}[h]
 \centering
 \begin{tabular}{c|llll}
  Y / Z & $z_0$ & $z_1$ & \ldots & $z_{p}$ \\
  \hline
  $y_0$ & $W_{0,0}$ & $W_{0,1}$ &\ldots& $W_{0,p}$ \\
  $y_1$ & $W_{1,0}$ & $W_{1,1}$ &\ldots& $W_{1,p}$ \\
  \ldots & \ldots & \ldots & \ldots & \ldots \\
  $y_{o}$ & $W_{o,0}$ & $W_{o,1}$ &\ldots& $W_{o,p}$
 \end{tabular}\hspace{1cm}%
 \begin{tabular}{c|ll}
  Y / Z & $z_0$ & $z_1$ \\
  \hline
  $y_0$ & $W_{0,0}$ & $W_{0,1}$ \\
  $y_1$ & $W_{1,0}$ & $W_{1,1}$
 \end{tabular}
 \caption{Contingency tables for two categorical variables: $Y$ and $Z$. (left) While $Y$ assumes values in $\{y_0,y_1,\ldots,y_o\}$, 
 $Z$ assumes values in $\{z_0, z_1, \ldots, z_p\}$. 
 The cell count $W_{i,j}$ indicates the number of observations in which $Y=y_i$ and $Z=z_j$. (right) A $2\times 2$ table, in which both
 $Y$ and $Z$ are binary variables.}
 \label{tab:contingecy}
\end{figure}
Contingency tables are particularly useful for examining relationships between categorical variables, enabling researchers to identify patterns, dependencies, or associations \citep{fienberg2007analysis}. They serve as the foundation for a wide variety of methods, such as the $\chi^2$ test of independence \citep{pearson1900x}, Fisher's exact test
\citep{fisher1936design}, tests for homogeneity \citep{stuart1955test}, and log-linear models \citep{bishop2007discrete}.

This paper studies when fixing margins in contingency tables alters Bayesian inference about measures of association. One of the most widely used measures of association between two binary variables is the odds ratio \citep{cornfield1951method}.
Let $\theta_{i,j}$ denote the probability that $Y=y_i$ and $Z=z_j$.
The odds of $Z=z_1$ versus $Z=z_0$ given $Y=y_i$ are
$\theta_{i,1}/\theta_{i,0}$.
The odds ratio is then defined as
\begin{align}
\psi := \frac{\theta_{0,0}\theta_{1,1}}{\theta_{0,1}\theta_{1,0}}.
\end{align}
This measure quantifies the strength and direction of the association between $Y$ and $Z$; in particular, $Y$ and $Z$ are independent if and only if $\psi = 1$.

A natural question is whether the marginal sums of a contingency table provide information about these odds ratios. When observations are independent and have a predefined total, the multinomial distribution is justified for the cell frequencies:
\begin{align}
 (W_{0,0},W_{0,1},W_{1,0},W_{1,1}) \sim 
 \text{Multinomial}(n,\theta_{0,0},\theta_{0,1},\theta_{1,0},\theta_{1,1}).
\end{align}
Under this model, the row sums are jointly ancillary for $\psi$ \citep{fisher1935logic}. 

If the row sums $W^{P_i} := W_{i,0}+W_{i,1}$ are constrained beforehand, the model becomes a product of binomial distributions,
\begin{align}
 W_{i,0} \sim \text{Binomial}(W^{P_i},\theta_{i,0}).
\end{align}
Despite the difference between the multinomial and independent binomial sampling schemes, the information obtained about $\psi$ is the same: for every prior on $\theta$, the posterior distribution of $\psi$ is identical.

However, if both row and column margins are fixed, the sampling distribution of the table is generalized hypergeometric, which forms the basis of Fisher's exact test. This model may contain less information about $\psi$ than the multinomial model \citep{Schervish2025}, reflecting the fact that fixing additional margins can discard information about the association parameter.

This paper studies whether analogous results hold for generalizations of the odds ratios. Specifically, we characterize the conditions under which fixing a margin does not alter the posterior distribution of a broad class of multiplicative contrasts of cell probabilities.
This class encompasses commonly used measures of association.
\begin{example}[Local odds ratio]
The local odds ratio \citep{Rudas1998} generalizes the odds ratio tor larger tables, such as the one shown on the left side of \cref{tab:contingecy}. The local odds ratio for cell $(i,j)$ is defined as
 \begin{align}
  \psi_{i,j} &=
  \frac{\theta_{i,j}\theta_{i+1,j+1}}{\theta_{i+1,j}\theta_{i,j+1}}.
 \end{align}
 One can interpret $\psi_{i,j}$ as the association between $Y$ and $Z$ when these variables are restricted, respectively, to the values $\{y_i,y_{i+1}\}$ and  
 $\{z_j,z_{j+1}\}$. More generally,
 $Y$ and $Z$ are independent when
 $\psi_{i,j} = 1$, for every 
 $0 \leq i \leq {o-1}$ and 
 $0 \leq j \leq p-1$.
\end{example}

\begin{example}[Higher-order odds ratios]
 Higher-order odds ratios \citep{Rudas1998} study the joint dependence of $k$ binary variables, $Y_1,\ldots,Y_k$, arranged in a $2^k$ contingency table.
 In this context, let the cell parameters be $\P(Y_1 = j_1, \ldots, Y_k = j_k|\theta) = \theta_{j_1,\ldots,j_k}$ and,
 for $j = (j_1,\ldots,j_k)$, let
 $|j| = \sum_{i=1}^kj_i \text{ (mod 2)}$.
 The (k-1)-th order odds ratio captures how much the (k-2)-th order odds ratio varies conditional on $Y_k$ and is
 \begin{align}
  \psi^{(k-1)} &=
  \frac{\prod_{j: |j| = 0}\theta_j}{\prod_{j: |j| = 1}\theta_j}.
 \end{align}
\end{example}

We generalize these odds ratios through a class of multiplicative contrasts of the cell probabilities, referred to as generalized odds ratios. We show that the posterior distribution of a generalized odds ratio is invariant to fixing a margin if and only if the corresponding contrast does not depend on the marginal probabilities. Specifically, posterior invariance holds if and only if the coefficients defining the multiplicative contrast sum to zero within each margin of the table.

\Cref{sec:def} introduces the notation and the definition of generalized odds ratios. \Cref{sec:main} presents the main results characterizing when margin constraints affect posterior inference.

\section{Definitions and Notation}
\label{sec:def}

The odds ratios discussed in the previous section can be written as multiplicative contrasts of the cell probabilities. To formalize this idea, it is convenient to represent the contingency table as a vector of cell counts. An arbitrary table is flattened into a single vector of counts, $X = (X_1,\ldots,X_r) \sim \text{Multinomial}(n,\theta)$. The rows of the original contingency table correspond to a partition  $\sP = \{P_1,\ldots,P_k\}$ of the cell indices $\{1,\ldots,r\}$. For each $j \in P_i$, $X_j$ is a cell count in the $i$-th row of the original table. Using this notation, the vector of row sums is defined as $X^\sP := \left(\sum_{i \in P} X_i\right)_{P \in \sP}$.  

\begin{definition}
 \label{def:godds}
 Let $c \in \Re^{r \cdot d}$.
The vector of generalized odds ratios $\psi\in\mathbb{R}^d$ is
 \begin{align} 
  \psi_j := \prod_{i=1}^r \theta_i^{c_{i,j}}.
 \end{align}
\end{definition}
Thus, generalized odds ratios include the odds ratio discussed in the previous section.

\begin{example}
 In a $2 \times 2$ table such that  $X = (W_{0,0},W_{0,1},W_{1,0},W_{1,1})$, $\psi$ equals the usual odds ratio by taking $c_{1} = 1$, $c_{2} = -1$, $c_{3} = -1$, and $c_{4} = 1$. Generalized odds ratios also encompass local and higher-order odds ratios.
\end{example}

To study how fixing partition sums affects inference about generalized odds ratios, it is convenient to work with a reparameterization of $\theta$. Let $\theta^{\sP}_P 
= \sum_{i \in P} \theta_i = \P(X \in P \mid \theta)$. For each $P\in\sP$, define $\nu^P_i = \frac{\theta_i}{\theta^{\sP}_P} = \P(X=i \mid X\in P,\theta)$. There is a one-to-one transformation between $\theta$ and $(\theta^\sP, \nu)$.

Using this reparametrization, we introduce the two sampling schemes considered in this paper. They correspond to whether the partition sums are fixed in advance or not.
\begin{definition}
 The unconstrained model for $X$ is
 $X \sim \text{Multinomial}(n,\theta)$.
 In the constrained model, the partition sums $X^{\sP}$ are fixed in advance. Furthermore, for each $P\in\sP$, let $X_P = (X_i)_{i\in P}$ be the vector of cell counts in $P$. In the constrained model,
 $X_P \sim \text{Multinomial}\left(X^{\sP}_P,\nu^P\right)$, and
 $(X_P)_{P \in \sP}$ are independent.
 The joint densities under the unconstrained and constrained models are $f_u(x,\theta)$ and $f_c(x,\theta)$, respectively.
\end{definition}

\section{Information about the generalized odds ratio in the partition sums}
\label{sec:main}

This section studies how constraining the partition sums of a contingency table affects posterior inference about generalized odds ratios. The main question is whether there exists information about $\psi$ contained in the partition sums $X^\sP$ that is lost when these are fixed in advance. To answer this, it is convenient to work with the reparametrisation $(\theta^\mathcal{P}, \nu)$, which separates the marginal parameters $\theta^\mathcal{P}$
from the within-partition parameters $\nu$. It is common to assume that $\theta^\sP$ is independent of $\nu$ \textit{a priori}. This is the case, for instance, when $\theta$ follows a Dirichlet distribution.

\begin{assumption}
 \label{ass:indep}
 $\theta^{\sP}$ and $\nu$ are independent \textit{a priori}.
\end{assumption}
Under \cref{ass:indep} the constrained and unconstrained models share an important property: the posterior distribution of $\nu$ is identical. Moreover, $\nu$ and $\theta^{\sP}$ remain independent \textit{a posteriori}, as \cref{lemma:equiv} shows:
\begin{lemma} 
 \label{lemma:equiv}
 Under \cref{ass:indep}:
 \begin{enumerate}
  \item The posterior for $\nu$ given $X$
  is the same in the unconstrained and in the
  constrained models,
  \item $\nu$ and $\theta^{\sP}$ are
  conditionally independent given $X$ in
  the unconstrained and in the constrained models.
 \end{enumerate}
\end{lemma}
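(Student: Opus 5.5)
The plan is to factor both likelihoods in the reparametrization $(\theta^\sP,\nu)$ and combine the factorization with the prior independence of \cref{ass:indep}. For the unconstrained model, substitute $\theta_i = \theta^\sP_P \nu^P_i$ for $i\in P$ into the multinomial density:
\begin{align}
 f_u(x\mid\theta) = \frac{n!}{\prod_{i} x_i!}\prod_{P\in\sP}\left(\theta^\sP_P\right)^{x^\sP_P}\prod_{P\in\sP}\prod_{i\in P}\left(\nu^P_i\right)^{x_i},
\end{align}
where $x^\sP_P=\sum_{i\in P}x_i$. The likelihood therefore splits as $g_1(\theta^\sP;x)\,h(\nu;x)$, with
\begin{align}
 h(\nu;x)=\prod_{P}\prod_{i\in P}\left(\nu^P_i\right)^{x_i}.
\end{align}
For the constrained model, the product of independent multinomials gives
\begin{align}
 f_c(x\mid\theta)=\prod_P \frac{x^\sP_P!}{\prod_{i\in P}x_i!}\,h(\nu;x),
\end{align}
which depends on $\theta$ only through $\nu$. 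In that model $\theta^\sP$ enters through a constant factor $g_2(\theta^\sP;x)\equiv 1$.

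Next, write the prior as $\pi(\theta^\sP,\nu)=\pi_1(\theta^\sP)\pi_2(\nu)$ by \cref{ass:indep}. In either model the posterior is proportional to
\begin{align}
 \left[g_m(\theta^\sP;x)\pi_1(\theta^\sP)\right]\left[h(\nu;x)\pi_2(\nu)\right],
\end{align}
with $m\in\{1,2\}$. A joint density that factors into a function of $\theta^\sP$ times a function of $\nu$ has independent components. This gives part 2 in both models. Normalizing the $\nu$ factor, the marginal posterior of $\nu$ is proportional to $h(\nu;x)\pi_2(\nu)$ in both models. This expression does not involve $m$, which gives part 1.

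The only delicate point is measure-theoretic rigor. Priors need not have densities, and the parameter space is a product of simplices, so the argument should be phrased via Bayes' theorem with respect to the prior measure rather than Lebesgue densities. The posterior is $\propto$ likelihood $\times\, d(\pi_1\otimes\pi_2)$. A likelihood of product form then yields a product posterior measure, provided the normalizing constant $\int g_m\,d\pi_1\int h\,d\pi_2$ is positive. This holds because the observed $x$ has positive marginal probability.

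The bijectivity of $\theta\mapsto(\theta^\sP,\nu)$ needs a small caveat. When $\theta^\sP_P=0$, $\nu^P$ is undefined. I will assume, or note, that $\theta^\sP_P>0$ almost surely under the prior, so the reparametrization is valid a.s. I expect this boundary issue to be the main, though minor, obstacle.
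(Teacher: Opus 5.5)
Your proposal is correct and follows essentially the same route as the paper. The paper takes the factorization of the multinomial likelihood into a Multinomial$(n,\theta^{\sP})$ factor and within-partition Multinomial$(x^{\sP}_P,\nu^P)$ factors from \citet{Kadane2021} (\cref{lemma:likelihood_ci}), while you derive it directly by substituting $\theta_i=\theta^{\sP}_P\nu^P_i$; both arguments then multiply by the product prior to get a posterior of product form, whose $\nu$-factor $f(\nu)\prod_{P}\prod_{i\in P}(\nu^P_i)^{x_i}$ is identical in the two models. Your remarks on phrasing Bayes' theorem relative to the prior measure and on the a.s.\ positivity of $\theta^{\sP}_P$ are extra care that the paper omits, not a different approach.
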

The proof relies on the fact that the multinomial likelihood separates into a factor depending only on $\theta^{\sP}$ and a factor depending only on $\nu$. Under the unconstrained model, both factors update the posterior; under the constrained model, $X^{\sP}$ is fixed, so only the $\nu$ factor contributes to the likelihood, leaving $\theta^{\sP}$ at its prior.

\Cref{lemma:equiv} implies that the posterior distribution of $\nu$ contains exactly the same information in the constrained and unconstrained models. Consequently, if a generalized odds ratio $\psi$ depends only on $\nu$, its posterior distribution must be identical in both models. Differences can arise only when $\psi$ also depends on $\theta^{\sP}$. In order to better distinguish this condition, \cref{lemma:log_odds} below shows that
$\psi_j$ can be decomposed into a term that depends solely on $\nu$ and another that depends solely on $\theta^{\sP}$.

\begin{definition}
 Let $\tau_j := 
 \log \left(\prod_{P \in \sP} 
  (\theta^{\sP}_P)^{\sum_{i \in P} c_{i,j}}\right)$ and $\rho_j := \log \left(\prod_{P \in \sP}\prod_{i \in P} 
 (\nu^P_i)^{c_{i,j}}\right)$.
\end{definition}

\begin{lemma}
 \label{lemma:log_odds}
 $\log\psi_j = \tau_j + \rho_j$.
\end{lemma}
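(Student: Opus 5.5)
The identity is a direct computation from \cref{def:godds} using the reparametrization $(\theta^{\sP},\nu)$. The plan is to substitute $\theta_i = \theta^{\sP}_P \nu^P_i$ into the definition of $\psi_j$, where $P$ is the unique block of $\sP$ containing $i$, and then separate the resulting product into a factor that depends only on the margins and a factor that depends only on $\nu$.

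First, because $\sP$ partitions $\{1,\ldots,r\}$, the product over cells can be regrouped by block:
\begin{align*}
\psi_j = \prod_{i=1}^r \theta_i^{c_{i,j}} = \prod_{P\in\sP}\prod_{i\in P}\theta_i^{c_{i,j}}.
\end{align*}
Second, inside each block the definition $\nu^P_i = \theta_i/\theta^{\sP}_P$ gives $\theta_i = \theta^{\sP}_P\,\nu^P_i$. Hence
\begin{align*}
\prod_{i\in P}\theta_i^{c_{i,j}} = \prod_{i\in P}(\theta^{\sP}_P)^{c_{i,j}}(\nu^P_i)^{c_{i,j}} = (\theta^{\sP}_P)^{\sum_{i\in P}c_{i,j}}\prod_{i\in P}(\nu^P_i)^{c_{i,j}}.
\end{align*}
Here we use that $\theta^{\sP}_P$ does not depend on $i$ within the block, so its exponents add. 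Third, taking the product over $P\in\sP$ and then the logarithm, and using that $\log$ turns products into sums, yields exactly $\tau_j+\rho_j$ as given in the preceding definition.

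The only point that needs care is the domain. Both the logarithm and the ratio $\nu^P_i$ require every $\theta_i>0$, and then each $\theta^{\sP}_P>0$ as well, so the reparametrization is well defined. I would state explicitly that the identity is taken on the interior of the simplex, which carries full prior mass under, for example, a Dirichlet prior. Apart from this domain caveat the computation is routine, and I do not expect any genuine obstacle.
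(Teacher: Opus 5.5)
Your proposal is correct and follows the paper's proof: substitute $\theta_i = \theta^{\sP}_P\nu^P_i$, regroup the product by block so that the exponents of $\theta^{\sP}_P$ add to $\sum_{i\in P}c_{i,j}$, and take logarithms. Your restriction to the interior of the simplex, needed because the exponents $c_{i,j}$ may be negative and the logarithm requires positive arguments, is a sensible precision that the paper leaves implicit.
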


\begin{proof}
  \begin{align}
  \log\psi_j 
  &= \log\left(\prod_{P \in \sP}\prod_{i \in P} 
  (\theta^{\sP}_P \cdot \nu^P_i)^{c_{i,j}}\right) \nonumber \\
  &= \log\left(\prod_{P \in \sP} 
  (\theta^{\sP}_P)^{\sum_{i \in P} c_{i,j}}\right) +
  \log\left(\prod_{P \in \sP}\prod_{i \in P} 
  (\nu^P_i)^{c_{i,j}}\right)
  = \tau_j + \rho_j
 \end{align}
\end{proof}
An immediate consequence of \Cref{lemma:log_odds} is that, if $\sum_{i \in P} c_{i,j} = 0$, for every $j$ and $P$, then
$\tau \equiv 0$ and
$\psi$ depends only on $\nu$. In this case, \cref{lemma:equiv} implies that, under \cref{ass:indep}, the posterior of $\psi$ is identical in the constrained and unconstrained models.

In contrast, \Cref{ass:np_large_n} presents mild conditions under which the posterior for $\psi$ fails to be equivalent in the constrained and unconstrained models.

\begin{assumption} \
 \label{ass:np_large_n}
 \begin{enumerate}
  \item $f(\theta) > 0$, for every $\theta$. That is, $\theta$ has full support on the simplex,
  \item There exists $j^*$ such that:
  \begin{enumerate}
   \item $\sum_{i \in P}c_{i,j^*} \neq 0$, for some $P \in \sP$,
   \item For every $x$,
   the characteristic function of $\rho_j$ under the unconstrained model, $\phi^u_{\rho_{j^*}|X=x}(t)$, is such that $\{t: \phi_{\rho_{j^*}|X=x}(t) \neq 0\}$ is dense, and
   \item Let $K_+ = \{P \in \sP: \sum_{i \in P} c_{i,j^*} > 0\}$ and
   $K_- = \{P \in \sP: \sum_{i \in P} c_{i,j^*} < 0\}$. 
   Either $|K_+| > 0$ and $n \geq |K_+|$ or $|K_-| > 0$ and $n \geq |K_-|$.
  \end{enumerate}
 \end{enumerate}
\end{assumption}

\Cref{ass:np_large_n}.1 rules out degenerate cases by ensuring that $\psi$ is not supported on a lower-dimensional subset of the parameter space. \Cref{ass:np_large_n}.2.a guarantees that $\psi$ depends nontrivially on both components of the reparametrization, $\nu$ and $\theta^{\sP}$, so that variation in $\psi$ reflects information from each. \Cref{ass:np_large_n}.2.b ensures that the contribution of $\tau_{j^*}$ can be identified from the decomposition $\log\psi_{j^*} = \tau_{j^*} + \rho_{j^*}$; intuitively, it prevents $\rho_{j^*}$ from masking the effect of $\tau_{j^*}$. This requirement is satisfied, for instance, when the distribution of $\rho_{j^*}$ is sufficiently light-tailed. Finally, \Cref{ass:np_large_n}.2.c imposes that $n$ is large enough relative to the dimension of the parameter space. In particular, this condition holds whenever $n \geq r$.

Based on the above discussion,
\Cref{thm:equiv} provides general conditions that ensure whether the posterior for $\psi$ is equivalent in the constrained and unconstrained models.

\begin{theorem}
 \label{thm:equiv}
 Under \cref{ass:indep},
 \begin{enumerate}
  \item If $\sum_{i \in P}c_{i,j} = 0$, 
  for every $P \in \sP$ and $1 \leq j \leq d$, then the posterior distribution of $\psi$ given $X$
  is the same in the constrained and unconstrained models.
  \item If $\sum_{i \in P}c_{i,j} \neq 0$, for some $P \in \sP$ and $1 \leq j \leq d$, then
  under \cref{ass:np_large_n}, there exists $x$ such that the posterior distribution of $\psi$ given $X=x$ differs in the constrained and unconstrained models.
 \end{enumerate}
\end{theorem}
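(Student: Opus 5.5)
Part 1 follows directly from the two lemmas. If $\sum_{i\in P}c_{i,j}=0$ for every $P$ and $j$, then $\tau\equiv 0$, so by \cref{lemma:log_odds} we have $\psi_j=\exp(\rho_j)$, which is a measurable function of $\nu$ alone. By \cref{lemma:equiv}.1 the posterior of $\nu$ given $X$ is the same under both models. Hence the posterior of $\psi$, being the pushforward of the posterior of $\nu$ under a fixed map, is also the same.

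For part 2 I will work with the single coordinate $\log\psi_{j^*}=\tau_{j^*}+\rho_{j^*}$. If its posterior differs for some $x$, then the posterior of the vector $\psi$ differs as well. By \cref{lemma:equiv}.2, $\tau_{j^*}$ (a function of $\theta^\sP$) and $\rho_{j^*}$ (a function of $\nu$) are conditionally independent given $X$ in both models. By \cref{lemma:equiv}.1, $\rho_{j^*}$ has the same posterior in both models. The characteristic functions therefore factor as
\[
\phi^u_{\log\psi_{j^*}|X=x}(t)=\phi^u_{\tau_{j^*}|X=x}(t)\,\phi_{\rho_{j^*}|X=x}(t),
\qquad
\phi^c_{\log\psi_{j^*}|X=x}(t)=\phi^c_{\tau_{j^*}|X=x}(t)\,\phi_{\rho_{j^*}|X=x}(t).
\]
Suppose, for contradiction, that the two posteriors of $\log\psi_{j^*}$ agree for every $x$. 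Then $\phi^u_\tau=\phi^c_\tau$ on the set where $\phi_\rho\neq 0$. That set is dense by \cref{ass:np_large_n}.2.b, and characteristic functions are continuous, so $\phi^u_\tau=\phi^c_\tau$ everywhere. It follows that the posterior of $\tau_{j^*}$ would be the same in both models for every $x$.

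It remains to show that the posterior of $\tau_{j^*}$ does differ for some $x$; I expect this to be the main obstacle. As explained after \cref{lemma:equiv}, in the constrained model the posterior of $\theta^\sP$ equals its prior, whatever $x$ is. In the unconstrained model it is the prior reweighted by the likelihood factor $\prod_P(\theta^\sP_P)^{x^\sP_P}$. Write $a_P=\sum_{i\in P}c_{i,j^*}$, so that $\tau_{j^*}=\sum_P a_P\log\theta^\sP_P$. Assume the case $|K_+|>0$ with $n\ge|K_+|$; the case for $K_-$ is symmetric. I will compare two tables:
\begin{enumerate}
\item a table $x$ that puts all $n$ counts into the cells of a single block $P_0\notin K_+$, if such a block exists;
\item a table $x'$ that places at least one count in every $P\in K_+$, which is possible because $n\ge|K_+|$.
\end{enumerate}
If the unconstrained posterior of $\tau$ matched the prior for both tables, then the laws of $\tau$ under the prior tilted by $\prod_P(\theta^\sP_P)^{x_P}$ and by $\prod_P(\theta^\sP_P)^{x'_P}$ would coincide. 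In particular, the conditional expectation of these weights given $\tau$ would equal the same constant for both tables.

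To get a contradiction, I will use full support (\cref{ass:np_large_n}.1) and look at the region where $\tau\to+\infty$. Because $\tau$ is a sum of logarithms of probabilities, it can only tend to $+\infty$ when some $\theta^\sP_P$ with $a_P<0$ tends to $0$. On that region the weight $\prod_{P\in K_+}(\theta^\sP_P)^{x'_P}$ behaves differently from the weight of $x$. More concretely, I would instead show directly that $E[\tau\mid x']\neq E[\tau]$ under the unconstrained posterior. Tilting by $\prod_P(\theta^\sP_P)^{x_P}$ with $x_P>0$ exactly on $K_+$ stochastically increases each $\log\theta^\sP_P$ for $P\in K_+$, by the FKG/positive-association argument on the simplex with full support. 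This makes $\tau$ larger in expectation, possibly after a truncation to handle integrability, and the case for $K_-$ is symmetric. Making this monotonicity step rigorous for a general full-support prior on the simplex, rather than for a Dirichlet prior, is where I expect most of the work to lie.
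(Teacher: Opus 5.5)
Your Part 1 and your reduction of Part 2 are exactly the paper's argument. That reduction factorises the characteristic functions and uses the dense non-vanishing set of $\phi_{\rho_{j^*}\mid X=x}$, so everything comes down to finding an $x$ for which the unconstrained posterior of $\tau_{j^*}$ differs from its prior. The gap is that final step, and neither route you sketch closes it.

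\emph{The FKG/monotonicity claim is false for general full-support priors on the simplex.} Take $c\equiv 0.5$ with $\sP=\{\{1,2\},\{3,4\}\}$, so both blocks lie in $K_+$ and $\tau_{j^*}=\log(\theta^{\sP}_{\{1,2\}}\theta^{\sP}_{\{3,4\}})$. Let the prior of $\theta^{\sP}_{\{1,2\}}$ concentrate near $\{1/2,3/4\}$ (smoothed to have full support), and let $x'$ have block sums $(2,1)$. The tilt $(\theta^{\sP}_{\{1,2\}})^2\theta^{\sP}_{\{3,4\}}$ equals $9/64$ at $3/4$ and $8/64$ at $1/2$, so it moves mass toward $3/4$. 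Hence $\theta^{\sP}_{\{3,4\}}$ decreases stochastically and $\E[\tau_{j^*}\mid x']<\E[\tau_{j^*}]$. Now add some prior mass near $\theta^{\sP}_{\{1,2\}}=0.1$, where the tilt is tiny and $\tau_{j^*}$ is lowest. You can then tune the prior so the two means coincide exactly, so no argument through the posterior mean can work in general; moreover, $\E[\tau_{j^*}]$ need not be finite.

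\emph{Your other sketch uses the wrong tail.} The region $\tau_{j^*}\to+\infty$ is governed by $K_-$, not $K_+$. It is empty when $K_-=\emptyset$, as in this example, where $\tau_{j^*}\le 0$.

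The missing idea is the one-sided tail bound behind \cref{lemma:notindep}. Let $a_P=\sum_{i\in P}c_{i,j^*}$, $h=\prod_{P\in K_+}\theta^{\sP}_P$ and $C=\max_{P\in K_+}a_P>0$. Since $\log\theta^{\sP}_P\le 0$, every term with $P\in K_-$ is nonnegative. Therefore $\tau_{j^*}\ge C\log h$, and $\{\tau_{j^*}<M\}\subseteq\{h<e^{M/C}\}$.

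Your table $x'$, with at least one count in every $P\in K_+$, has tilt $w_{x'}=\prod_{P\in\sP}(\theta^{\sP}_P)^{(x')^{\sP}_P}\le h$. Suppose the unconstrained posterior of $\tau_{j^*}$ given $x'$ equalled its prior. Then $\E[w_{x'}\mid\tau_{j^*}<M]=\E[w_{x'}]>0$ for every $M$, where full support guarantees $\P(\tau_{j^*}<M)>0$. But the left-hand side is at most $e^{M/C}\to 0$ as $M\to-\infty$, a contradiction.

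The $K_-$ case is symmetric, with $h=\prod_{P\in K_-}\theta^{\sP}_P$, $C=\min_{P\in K_-}a_P<0$ and $M\to+\infty$. So your witness $x'$ was the right one, and a single table suffices. What was missing is to look at the lower tail of $\tau_{j^*}$, where the tilt is forced to vanish, rather than at means or stochastic monotonicity.
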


The proof of \cref{thm:equiv} is in the Appendix. The first part holds because, when the coefficients sum to zero within each partition, \cref{lemma:log_odds} gives $\tau \equiv 0$, so $\psi$ reduces to a function of $\nu$ alone, and the conclusion follows from \cref{lemma:equiv}. The second part is established by comparing the characteristic functions of $\log\psi_{j^*}$ under both models: since $\tau_{j^*}$ is updated by $X$ in the unconstrained model but not in the constrained one, the two characteristic functions can be shown to differ under \cref{ass:np_large_n}.

Given its conjugacy, the Dirichlet prior is commonly used in Multinomial models. The following example applies \cref{thm:equiv} to a Dirichlet prior.

\begin{example}[Dirichlet prior]
 \label{ex:dirichlet}
 Let $\theta \sim \text{Dirichlet}(\alpha)$.
 It follows from the
 aggregation and neutrality
 properties of the Dirichlet,
 that  $\theta^{\sP}$ and 
 $\nu$ are independent.
 Hence,
 it follows from \cref{thm:equiv}.1 that,
 if $\sum_{i \in P}c_i = 0$ for every $P \in \sP$, then
 the posterior for $\psi$ given $X$ is the same in the constrained and unconstrained models.

 Next, consider that $\sum_{i \in P}c_i \neq 0$ for some $P$.
 We proceed to derive $\phi^u_{\log\psi|X=x}$ and $\phi^c_{\log\psi|X=x}$. It follows from \cref{lemma:equiv} that $\phi^c_{\rho|X=x} \equiv \phi^u_{\rho|X=x}$. Also, since $\nu^P|X=x \sim \text{Dirichlet}(\alpha_P+X_P)$,
 \begin{align}
  \phi^c_{\rho|X=x}(t) = \phi^u_{\rho|X=x}(t) 
  &= \prod_{P \in \sP}\E\left[\prod_{i \in P}(\nu^P_i)^{itc_i}\bigg|X=x\right] \nonumber \nonumber \\
  &= \prod_{P \in \sP}
  \left(\frac{\Gamma(\sum_{i \in P} (\alpha_i + x_i))}{\Gamma(\sum_{i \in P} (\alpha_i + x_i + itc_i))} \cdot 
  \prod_{i \in P}
  \frac{\Gamma(\alpha_i + x_i + itc_i)}{\Gamma(\alpha_i + x_i)}\right).
 \end{align}
 Since the $\Gamma$ function has no zeroes and the Dirichlet distribution has full support, it follows that \cref{ass:np_large_n}.1, \cref{ass:np_large_n}.2.a, and
 \cref{ass:np_large_n}.2.b are satisfied.

 Next, we evaluate $\phi^u_{\tau|X=x}$ and $\phi^c_{\tau|X=x}$. Let 
 for each $z \in \Re^r$, 
 $z^{\mathcal{P}} = (\sum_{i \in P}z_i)_{P \in \mathcal{P}}$ be the vector sums within each partition. Under the unconstrained model,
 $\theta^{\sP}|X=x \sim \text{Dirichlet}(\alpha^\sP + x^\sP)$. Similarly, under the constrained model, $\theta^{\sP}|X=x \sim \text{Dirichlet}(\alpha^\sP)$.
 Therefore,
 \begin{align}
  \phi^u_{\tau|X=x}(t)
  &= \E\left[\prod_{P \in \sP} 
  (\theta^{\sP}_P)^{itc^\sP_P}\bigg|X=x \right]
  = 
  \frac{\Gamma\left(n+\sum_{P\in\sP}\alpha^\sP_P\right)}
       {\Gamma\left(n+\sum_{P\in\sP}(\alpha^\sP_P+itc^\sP_P)\right)}
  \prod_{P\in\sP}
  \frac{\Gamma(\alpha^\sP_P+x^\sP_P+itc^\sP_P)}
       {\Gamma(\alpha^\sP_P+x^\sP_P)}, \nonumber \\
  \phi^c_{\tau|X=x}(t)
  &= \E\left[\prod_{P \in \sP} 
  (\theta^{\sP}_P)^{itc^\sP_P}\right] =
  \frac{\Gamma\!\left(\sum_{P\in\sP}\alpha^\sP_P\right)}
       {\Gamma\!\left(\sum_{P\in\sP}(\alpha^\sP_P+itc^\sP_P)\right)}
  \prod_{P\in\sP}
  \frac{\Gamma(\alpha^\sP_P+itc^\sP_P)}
       {\Gamma(\alpha^\sP_P)}.
 \end{align}
 Since $\log\psi = \tau + \rho$ and $\tau$ and $\rho$ are independent under both sampling schemes,
 \begin{align}
 \phi^u_{\log\psi|X=x}(t)
 &= \phi^u_{\rho|X=x}(t) \cdot \frac{\Gamma\left(n+\sum_{P\in\sP}\alpha^\sP_P\right)}
       {\Gamma\left(n+\sum_{P\in\sP}(\alpha^\sP_P+itc^\sP_P)\right)}
  \prod_{P\in\sP}
  \frac{\Gamma(\alpha^\sP_P+x^\sP_P+itc^\sP_P)}
       {\Gamma(\alpha^\sP_P+x^\sP_P)}, \nonumber \\
 \phi^c_{\log\psi|X=x}(t) 
 &= \phi^u_{\rho|X=x}(t) \cdot
  \frac{\Gamma\!\left(\sum_{P\in\sP}\alpha^\sP_P\right)}
       {\Gamma\!\left(\sum_{P\in\sP}(\alpha^\sP_P+itc^\sP_P)\right)}
  \prod_{P\in\sP}
  \frac{\Gamma(\alpha^\sP_P+itc^\sP_P)}
       {\Gamma(\alpha^\sP_P)}.
 \end{align}
 The above characteristic functions are equal if $n = 1$, $r = 4$, $\sP = \{\{1,2\},\{3,4\}\}$, $\alpha \equiv 1$, and $c \equiv 0.5$,
 \begin{align*}
 \phi^u_{\log\psi|X=x}(t) 
 &= \phi^u_{\rho|X=x}(t) \cdot \frac{\Gamma\left(5\right)}
       {\Gamma\left(5+2it\right)} \cdot \frac{\Gamma(2+it)}{\Gamma(2)} \cdot \frac{\Gamma(3+it)}{\Gamma(3)} \\
       &= \phi^u_{\rho|X=x}(t) \cdot \frac{\Gamma\left(4\right)}
       {\Gamma\left(4+2it\right)} \cdot \frac{\Gamma(2+it)}{\Gamma(2)} \cdot \frac{\Gamma(2+it)}{\Gamma(2)}, \\
 \phi^c_{\log\psi|X=x}(t)
 &= \phi^u_{\rho|X=x}(t) \cdot \frac{\Gamma\left(4\right)}
       {\Gamma\left(4+2it\right)} \cdot \frac{\Gamma(2+it)}{\Gamma(2)} \cdot \frac{\Gamma(2+it)}{\Gamma(2)}.
 \end{align*}
 As long as \cref{ass:np_large_n}.2.c is also satisfied, then there exists $x$ such that $\phi^u_{\log\psi|X=x} \neq \phi^c_{\log\psi|X=x}$ and the posterior for $\psi$ is different in the constrained and unconstrained models. Since
 $c \equiv 0.5$, obtain that $|K^{+}| = 2$, and 
 \cref{ass:np_large_n}.2.c is satisfied as long as $n \geq 2$. 
 
 This example shows that the condition $\sum_{i\in P} c_i \neq 0$ alone does not guarantee different posterior distributions. However, as long as Assumption~\ref{ass:np_large_n}.2.c is also satisfied, then there exists $x$ such that $\phi^u_{\log\psi|X=x} \neq \phi^c_{\log\psi|X=x}$ and therefore the posterior distributions differ.
\end{example}

\Cref{thm:equiv,lemma:equiv} rely on the independence between $\nu$ and $\theta^{\sP}$. Without this assumption, the posterior distribution of $\nu$ may differ between the constrained and unconstrained models. As a result, the posterior for $\psi$ might differ in these models even when $\sum_{i \in P}c_{i,j} = 0$ for every $P$ and $j$, as illustrated in the following example.

\begin{example}[Dependence between $\theta^\sP$ and $\nu$]
 Let $r = 4$, $\sP=\{\{1,2\},\{3,4\}\}$,
 $\theta^{\sP}_{\{1,2\}} \sim \text{Beta}(\alpha_1,\alpha_2)$, 
 $\P(\theta^{\sP} = \nu^{\{1,2\}}) = 1$, and
 $\nu_3|\theta^{\sP}_{\{3,4\}} \sim \text{Beta}(\alpha_3,\alpha_4)$.
 In both the unconstrained and constrained models,
 \begin{align}
  \nu^{\{3,4\}}|X \sim \text{Beta}(\alpha_3+x_3, \alpha_4+x_4).
 \end{align}
 
 However, while in the unconstrained model 
 $\nu_1|X \sim \text{Beta}(\alpha_1+2x_1+x_2,\alpha_2+x_2+x_3+x_4)$, in the constrained model
 $\nu_1|X \sim \text{Beta}(\alpha_1+x_1,\alpha_2+x_2)$. Hence, by taking the usual odds ratio, with $c_1 = c_4 = 1$ and $c_2 = c_3 = -1$, it follows from \cref{ex:dirichlet} that 
 \begin{align}
 \phi^u_{\log\psi|X=x}(t)
 &= \frac{\Gamma(\alpha_3 + x_3 - it)}{\Gamma(\alpha_3 + x_3)} \cdot\frac{\Gamma(\alpha_4 + x_4 + it)}{\Gamma(\alpha_4 + x_4)} \cdot \frac{\Gamma(\alpha_1 + 2x_1 + x_2 + it)}{\Gamma(\alpha_1 + 2x_1 + x_2)} \cdot\frac{\Gamma(\alpha_2 + x_2+x_3+x_4 - it)}{\Gamma(\alpha_2 + x_2+x_3+x_4)}, \nonumber \\
 \phi^c_{\log\psi|X=x}(t) 
 &= \frac{\Gamma(\alpha_3 + x_3 - it)}{\Gamma(\alpha_3 + x_3)} \cdot\frac{\Gamma(\alpha_4 + x_4 + it)}{\Gamma(\alpha_4 + x_4)} \cdot \frac{\Gamma(\alpha_1 + x_1 + it)}{\Gamma(\alpha_1 + x_1)} \cdot\frac{\Gamma(\alpha_2 + x_2 - it)}{\Gamma(\alpha_2 + x_2)}.
 \end{align}
 If for instance, $\alpha \equiv 1$ and $x \equiv 1$,
 \begin{align}
 \phi^u_{\log\psi|X=x}(t)
 &= \Gamma(2 - it)^2 \cdot \Gamma(2+it)^2 \cdot 
 \frac{(9+t^2)(4+t^2)}{36}, \nonumber \\
 \phi^c_{\log\psi|X=x}(t) 
 &= \Gamma(2-it)^2\Gamma(2+it)^2.
 \end{align}
 Hence, when $\theta^\sP$ is dependent of $\nu$,
 the posterior of $\psi$ can differ between models even though $\sum_{i \in P}c_i = 0$, for every $P \in \sP$,
 because \cref{lemma:equiv} no longer applies and the posterior for $\nu$ might differ between models. 
\end{example}

The next example illustrates that, as long as $\rho$ is light-tailed, \cref{ass:np_large_n}.2.b is satisfied:

\begin{example}[Light-tailed $\rho$]
\label{ex:analytic}
Assume that, for every $x$,
\begin{align}
 \limsup_{k\rightarrow\infty} 
\frac{(\E[|\rho|^k|X=x])^{1/k}}{k} < \infty .
\end{align}
Then it follows from \citet{feller1991introduction}[p.514] that the characteristic function
$\phi^u_{\rho|X=x}(t)$ admits an analytic extension to the complex plane,
denoted $\phi^{u,*}_{\rho|X=x}(t)$.
Since $\phi^{u,*}_{\rho|X=x}(0)=1$ and $\phi^{u,*}_{\rho|X=x}$ is analytic,
its set of zeros 
$\{t \in \mathbb{C} : \phi^{u,*}_{\rho|X=x}(t)=0\}$ is isolated.
Since $\phi^{u,*}_{\rho|X=x}$ extends $\phi^u_{\rho|X=x}$,
it follows that
$\{t \in \mathbb{R} : \phi^u_{\rho|X=x}(t)=0\}
$ is also an isolated set.
Therefore, as long as the moments of $\rho$ do not grow too quickly, Assumption~\ref{ass:np_large_n}.2.b is satisfied.

Note that Assumption~\ref{ass:np_large_n}.2.b might be satisfied even when $\rho$ is heavy-tailed. For instance, if $\rho \sim \text{Cauchy}(0,1)$, then $\phi_{\rho}(t) = \exp(-|t|)$, which satisfies Assumption~\ref{ass:np_large_n}.2.b.
\end{example}

\Cref{thm:equiv} addresses posterior equivalence in finite samples. It is also instructive to examine how the constrained and unconstrained models differ asymptotically under \cref{ass:np_large_n}.

Under the unconstrained model, since $X \sim \text{Multinomial}(n,\theta)$,  for any prior with full support, the posterior for $\theta$ concentrates at the true value $\theta_0$ as $n \rightarrow \infty$. Consequently, since $\psi$ is a continuous function of $\theta$, the posterior distribution of $\psi$ converges to a point mass at $\psi(\theta_0)$.

The situation is fundamentally different under the constrained model. In this case, the partition sums $X^{\sP}$ are fixed by design, and therefore no learning about $\theta^{\sP}$ occurs, even asymptotically. Although $\nu$ is consistently learned from the within-partition counts, the posterior distribution of $\theta^{\sP}$ remains equal to its prior distribution for all sample sizes. As a result, whenever $\psi$ depends nontrivially on $\theta^{\sP}$, its posterior distribution cannot concentrate to a point mass under the constrained model.

This asymptotic discrepancy highlights that posterior non-equivalence between constrained and unconstrained sampling schemes is not merely a finite-sample phenomenon. When an odds ratio involves marginal parameters, constraining the corresponding partition sums prevents asymptotic learning about those components, leading to persistent posterior uncertainty even as $n \rightarrow \infty$. In contrast, odds ratios that depend only on $\nu$ exhibit posterior concentration in both models, consistent with the finite-sample equivalence established in \cref{thm:equiv}. 

The results in \cref{thm:equiv} characterize posterior invariance when a single partition is constrained. A natural question is whether analogous results hold when two partitions are constrained simultaneously. The following example shows that this is not the case in general, and that constraining a second margin can alter the posterior of $\psi$ even when each partition satisfies \cref{ass:np_large_n}.

\begin{example}[Simultaneous partition constraint]
 Consider a flattened $2 \times 2$ table with
  observed counts $x=(7,1,1,1)$ and
 $\theta \sim \text{Dirichlet}(1,1,1,1)$. By \cref{thm:equiv}, fixing either the row or the column margins alone does not alter the posterior of the odds ratio, $\psi = \frac{\theta_1 \theta_4}{\theta_2 \theta_3}$.

 However, when both margins are fixed simultaneously, the sampling distribution of the table is Fisher's noncentral hypergeometric, and the posterior of $\psi$ is no longer invariant. Indeed, \cref{fig:double_constraint} displays the different posteriors obtained for $\log\psi$ under single-partition and double-partition constraints.

 \Cref{thm:equiv} does not generalize to this example because the reparametrization $(\theta^\sP,\nu)$ is partition-dependent and, hence, cannot be used for both partitions simultaneously.

 \begin{figure}
  \centering
  \includegraphics[width=0.5\linewidth]{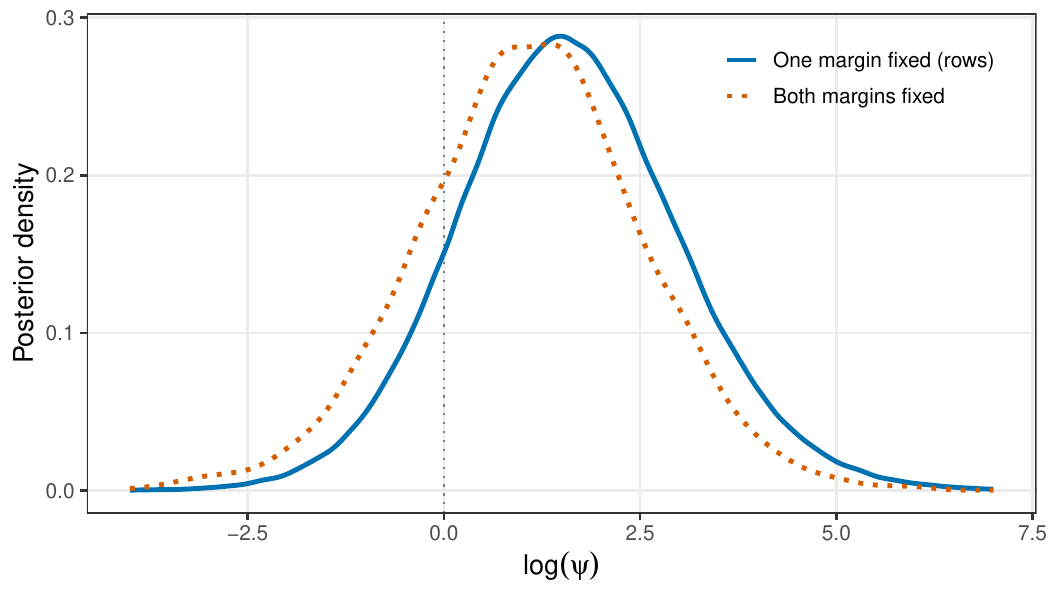}
  \caption{Comparison between the posteriors for the log odds ratio in a $2 \times 2$ when $x = (7,1,1,1)$ under different sampling schemes: (solid blue) row sums are fixed, and (dotted orange) both row and column sums are fixed.}
  \label{fig:double_constraint}
 \end{figure}
\end{example}

\section{Conclusion}

This paper studied when fixing the margins of a contingency table alters Bayesian inference about measures of association. We considered a broad class of multiplicative contrasts of cell probabilities, called generalized odds ratios, and characterized the conditions under which their posterior distribution is invariant to constraining the partition sums of the table.

The main result, \cref{thm:equiv}, shows that under mild conditions this invariance holds if and only if the contrast coefficients sum to zero within each partition. This condition unifies and extends classical results for the standard odds ratio in $2\times 2$ tables, and applies broadly to local odds ratios, and higher-order odds ratios. The distinction between constrained and unconstrained models is not merely a finite-sample phenomenon: when a generalized odds ratio depends nontrivially on $\theta^\sP$, the constrained model cannot learn about those components even as the sample size increases, leading to persistent posterior uncertainty.

\section*{Acknowledgments}

Rafael Bassi Stern is grateful for the financial support of CNPq (grant 313557/2025-0) and University of São Paulo (PRPI/USP 58/2023), and produced this work as part of the activities of FAPESP, Brazil Research, Innovation and Dissemination Center for Neuromathematics (grant 2013/07699-0).

\bibliographystyle{plainnat}
\bibliography{main}

\section{Proofs}

\subsection*{Proof of \cref{lemma:equiv}}

\begin{lemma}[\citet{Kadane2021}]
 \label{lemma:likelihood_ci}
 $X \sim \text{Multinomial}(n,\theta)$ 
 if and only if:
 \begin{itemize}
  \item $X^{\sP} \sim 
  \text{Multinomial}(n, \theta^{\sP})$,
  \item $X_P |X^{\sP} \sim 
  \text{Multinomial}\left(X^{\sP}_P,\nu^P\right)$, and
  \item $(X_P)_{P \in \sP}$ are conditionally
  independent given $X^{\sP}$.
 \end{itemize}
 Furthermore,
 \begin{align}
  f(x|\theta^{\sP},\nu)
  &= \underbrace{\left(n! \prod_{P \in \sP}
  \frac{(\theta^\sP_P)^{x^{\sP}_P}}{(x^{\sP}_P)!}\right)}_{\text{Multinomial}(n,\theta^{\sP})} 
  \ \cdot \
  \prod_{P \in \sP}
  \underbrace{(x^{\sP}_P)!
  \prod_{i \in P} \frac{ (\nu^P_i)^{x_i}}{x_i!}}_{\text{Multinomial}(x^{\sP}_P,\nu^P)}.
 \end{align}
\end{lemma}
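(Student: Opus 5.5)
The statement to prove is \cref{lemma:likelihood_ci}, the factorization of the multinomial into a multinomial on the partition sums and independent within-partition multinomials. I will verify it by a direct computation of the probability mass function, in both directions.

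\emph{Forward direction.} Assume $X \sim \text{Multinomial}(n,\theta)$, so that
\begin{align*}
f(x\mid\theta) = n!\prod_{i=1}^r \frac{\theta_i^{x_i}}{x_i!}
\end{align*}
on $\{x:\sum_i x_i=n\}$. Since $\sP$ is a partition and $\theta_i=\theta^{\sP}_P\nu^P_i$ for $i\in P$, the product over $i$ regroups as a product over $P\in\sP$ of products over $i\in P$. Within each $P$ we have
\begin{align*}
\prod_{i\in P}\theta_i^{x_i}=(\theta^{\sP}_P)^{x^{\sP}_P}\prod_{i\in P}(\nu^P_i)^{x_i}.
\end{align*}
Multiplying and dividing by $\prod_P (x^{\sP}_P)!$ then gives exactly the displayed factorization. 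This proves the ``furthermore'' part.

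\emph{The three bullet properties.} Summing the joint mass over all $x$ with a fixed value of $x^{\sP}$, each within-partition factor sums to one by the multinomial theorem, because $\sum_{i\in P}\nu^P_i=1$. This leaves the $\text{Multinomial}(n,\theta^{\sP})$ mass for $X^{\sP}$. Dividing the joint mass by this marginal gives the conditional mass of $X$ given $X^{\sP}$. Because $X^{\sP}$ is a function of $X$, this conditional equals the product over $P$ of the $\text{Multinomial}(x^{\sP}_P,\nu^P)$ masses. A product form of this kind yields both the stated conditional distributions and the conditional independence of $(X_P)_{P\in\sP}$.

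\emph{Converse.} Assume the three bullets hold. Then the joint mass of $X$ equals the marginal of $X^{\sP}$ times the product of the conditionals, which is the right-hand side of the factorization. Reversing the algebra above recovers $n!\prod_i\theta_i^{x_i}/x_i!$, so $X\sim\text{Multinomial}(n,\theta)$.

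\emph{Where care is needed.} There is no real obstacle. The points to watch are the bookkeeping of supports (the conditional law is supported on $x$ compatible with $x^{\sP}$) and the boundary case $\theta^{\sP}_P=0$, where $\nu^P$ is undefined. In that case $x^{\sP}_P=0$ almost surely, so any convention for $\nu^P$ leaves the identities intact.
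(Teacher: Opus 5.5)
Your proposal is correct. Regrouping $\theta_i=\theta^{\sP}_P\nu^P_i$ gives the factorization, the multinomial theorem within each block gives the marginal of $X^{\sP}$, and dividing by that marginal gives the product-form conditional, which yields all three bullet properties; reading the chain backwards gives the converse, and your remark on the boundary case $\theta^{\sP}_P=0$ handles the only place where $\nu^P$ is undefined. The paper gives no proof of this lemma and simply cites \citet{Kadane2021}, so your direct computation of the probability mass function is a self-contained version of the standard argument it relies on, not a competing route.
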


\begin{proof}[Proof of \cref{lemma:equiv}]
 \begin{align}
  f_u(\theta^{\sP},\nu|x) 
  &\propto f_u(\theta^{\sP},\nu,x) \\
  &= f(\nu)f(\theta^{\sP}) f_u(x|\nu, \theta^{\sP}) \\
  &= f(\nu)f(\theta^{\sP}) 
  \left(n! \prod_{P \in \sP}
  \frac{(\theta^\sP_P)^{x^{\sP}_P}}{(x^{\sP}_P)!}\right)
  \cdot 
  \prod_{P \in \sP}
  (x^{\sP}_P)!
  \prod_{i \in P} \frac{ (\nu_P)_i^{x_i}}{x_i!}  
  &\text{\cref{lemma:likelihood_ci}} \\
  &\propto \underbrace{
  \left(f(\theta^{\sP}) \prod_{P \in \sP} 
  (\theta^\sP_P)^{x^{\sP}_P}\right)}_{\propto f_u(\theta^{\sP}|x)} \cdot 
  \underbrace{\left(f(\nu) 
  \prod_{P \in \sP}\prod_{i \in P} (\nu_P)_i^{x_i}\right)}_{\propto f_u(\nu|x)} \\
  & \\
  f_c(\theta^{\sP},\nu|x) 
  & \propto f_c(\theta^{\sP},\nu,x) \\
  &\propto
  f(\theta^{\sP})f(\nu) 
  \I(X^{\sP}) \prod_{P \in \sP} (x^{\sP}_P)!
  \prod_{i \in P} \frac{ (\nu_P)_i^{x_i}}{x_i!} \\
  &\propto \underbrace{f(\theta^{\sP})}_{\propto f_c(\theta^{\sP}|x)} \cdot
  \underbrace{\left(f(\nu) 
  \prod_{P \in \sP}\prod_{i \in P} (\nu_P)_i^{x_i}\right)}_{\propto f_c(\nu|x)}
 \end{align}
\end{proof}

\subsection*{Proof of \cref{thm:equiv}}

\begin{lemma}
 \label{lemma:plain}
 $\tau_j$ is independent of $X$ if and only if, for every
 $\alpha \in \N^k$ such that $\sum_{i=1}^k \alpha_i = n$,
 \begin{align}
  \E\left[\prod_{i=1}^k \theta_i^{\alpha_i}|\tau_j \right] \equiv 
 \E\left[\prod_{i=1}^k \theta_i^{\alpha_i}\right].
 \end{align}
\end{lemma}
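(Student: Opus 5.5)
The plan is to reduce independence to a statement about conditional probabilities of the finitely many values of $X$, and then rewrite those probabilities as mixed moments of $\theta$. We work in the unconstrained model, with the joint law of $(\theta, X)$ given by the prior on $\theta$ and $X \mid \theta \sim \text{Multinomial}(n,\theta)$. Here $k$ is the number of cells (written $r$ elsewhere). We read $\E[\,\cdot \mid \tau_j]$ as a version of the conditional expectation, so $\equiv$ means equality almost surely in $\tau_j$.

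First, since $X$ takes values in the finite set $\{x \in \N^k : \sum_i x_i = n\}$, $\tau_j$ and $X$ are independent if and only if, for every such $x$,
\begin{align*}
 \P(X = x \mid \tau_j) = \P(X = x) \quad \text{almost surely.}
\end{align*}
The forward direction is immediate. For the converse, for any Borel set $B$ and any $x$ we have $\P(X=x, \tau_j \in B) = \E[\I(\tau_j\in B)\,\P(X=x\mid\tau_j)] = \P(X=x)\,\P(\tau_j\in B)$, and independence follows because the events $\{X=x\}$ generate $\sigma(X)$.

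Second, we compute $\P(X=x \mid \tau_j)$ by the tower property. Since $\tau_j$ is a measurable function of $\theta$ (through $\theta^{\sP}$), we have $\sigma(\tau_j) \subseteq \sigma(\theta)$, and therefore
\begin{align*}
 \P(X=x \mid \tau_j) = \E\left[\P(X = x \mid \theta) \mid \tau_j\right] = \binom{n}{x_1,\ldots,x_k}\,\E\left[\prod_{i=1}^k \theta_i^{x_i} \,\Big|\, \tau_j\right].
\end{align*}
Taking expectations gives the same expression with $\E[\prod_i \theta_i^{x_i}]$ in place of the conditional moment. The multinomial coefficient is a positive constant, so it cancels from both sides. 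Writing $\alpha = x$, the condition of the first step becomes exactly the displayed moment identity for every $\alpha \in \N^k$ with $\sum_i \alpha_i = n$.

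There is no serious obstacle here. The only points that need care are the measure-theoretic details: justifying the tower-property step, which requires $\sigma(\tau_j) \subseteq \sigma(\theta)$ and boundedness of $\prod_i \theta_i^{\alpha_i}$ by $1$, so that all conditional expectations exist; and interpreting $\equiv$ almost surely. No property of $\tau_j$ beyond measurability in $\theta$ is used, so the argument applies verbatim to any function of $\theta$.
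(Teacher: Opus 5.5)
Your proof is correct and takes the same route as the paper: reduce independence of the finite-valued $X$ from $\tau_j$ to $\P(X=\alpha\mid\tau_j)=\P(X=\alpha)$ for each $\alpha$, then use the tower property to turn both sides into moments $\E[\prod_i\theta_i^{\alpha_i}\mid\tau_j]$ and $\E[\prod_i\theta_i^{\alpha_i}]$. You are slightly more careful than the paper in two places: the paper drops the multinomial coefficient from $\P(X=\alpha\mid\theta)$, which is harmless since it cancels, and it does not justify why the identity of conditional probabilities is equivalent to independence.
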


\begin{proof}
 For every
 $\alpha \in \N^k$ such that $\sum_{i=1}^k \alpha_i = n$,
 \begin{align}
  \P(X = \alpha) 
  &= \E[\P(X=\alpha|\theta)] 
  = \E\left[\prod_{i=1}^k \theta_i^{\alpha_i}\right] 
  \text{, and } \\
  \P(X=\alpha|\tau_j)
  &= \E[\P(X=\alpha|\tau_j,\theta)|\tau_j] 
  = \E\left[\prod_{i=1}^k \theta_i^{\alpha_i}|\tau_j\right]
 \end{align}
 The proof is complete by noting that
 $X$ is independent of $\tau_j$ if and only if, for every
 $\alpha \in \N^k$ such that $\sum_{i=1}^k \alpha_i = n$,
 $\P(X=\alpha|\tau_j) \equiv \P(X = \alpha)$.
\end{proof}

\begin{lemma}
 \label{lemma:notindep}
 Let $K_+ = \{i: \sum_{i \in P} c_{i,j^*} > 0\}$ and
 $K_- = \{i: \sum_{i \in P} c_{i,j^*} < 0\}$. If
 $f(\theta) > 0$:
 \begin{enumerate}[label=(\alph*)]
  \item If $|K_+| > 0$ and $n \geq |K_+|$, then $X$ is not independent of $\tau_{j^*}$,
  \item If $|K_-| > 0$ and $n \geq |K_-|$, then
 $X$ is not independent of $\tau_{j^*}$.
 \end{enumerate}
\end{lemma}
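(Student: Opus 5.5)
The plan is to exhibit a single function of $X$ whose conditional probability given $\tau_{j^*}$ cannot be constant. I prove (a); (b) is symmetric. Write $a_P := \sum_{i\in P} c_{i,j^*}$, so that $\tau_{j^*} = \sum_{P\in\sP} a_P \log\theta^\sP_P$. Here I read $K_+$ as the set of partition blocks $P$ with $a_P>0$, as in \cref{ass:np_large_n}. Throughout, $X\sim\text{Multinomial}(n,\theta)$ with $\theta$ drawn from a prior density satisfying $f(\theta)>0$.

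\textbf{Step 1 (choose the data point).} Since $n\ge |K_+|>0$, I choose nonnegative integers $m_P$, $P\in\sP$, with $m_P\ge 1$ for every $P\in K_+$, $m_P=0$ otherwise, and $\sum_P m_P = n$. By \cref{lemma:likelihood_ci},
\begin{align*}
\P(X^\sP = m \mid \theta) = C_m \prod_{P\in K_+} (\theta^\sP_P)^{m_P},
\end{align*}
where $C_m$ is the multinomial coefficient. Now suppose $X$ were independent of $\tau_{j^*}$. Then the event $\{X^\sP=m\}$, which is a function of $X$, is also independent of $\tau_{j^*}$. In the spirit of \cref{lemma:plain}, this means $g(\tau_{j^*}) := \E\left[\prod_{P\in K_+}(\theta^\sP_P)^{m_P}\mid \tau_{j^*}\right]$ is almost surely equal to the constant $\E\left[\prod_{P\in K_+}(\theta^\sP_P)^{m_P}\right]$. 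That constant is strictly positive because $f(\theta)>0$.

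\textbf{Step 2 (bound the monomial on the lower tail of $\tau_{j^*}$).} For $P\in K_-$ we have $a_P<0$ and $\log\theta^\sP_P\le 0$, so those terms of $\tau_{j^*}$ are nonnegative; terms with $a_P=0$ vanish. Hence
\begin{align*}
\tau_{j^*} \ge \sum_{P\in K_+} a_P\log\theta^\sP_P .
\end{align*}
Consequently, on the event $\{\tau_{j^*}\le -M\}$ at least one $P\in K_+$ satisfies $\log\theta^\sP_P \le -M/A$, where $A := \sum_{P\in K_+} a_P>0$. Since every $m_P\ge 1$ on $K_+$ and all other factors are at most $1$, the monomial is at most $e^{-M/A}$ on that event. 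Therefore
\begin{align*}
\E\left[\prod_{P\in K_+}(\theta^\sP_P)^{m_P} \,\Big|\, \tau_{j^*}\le -M\right] \le e^{-M/A}\longrightarrow 0 \quad (M\to\infty).
\end{align*}
If $g$ were a.s.\ constant, this conditional expectation would equal the positive constant for every $M$ with $\P(\tau_{j^*}\le -M)>0$, which is a contradiction.

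\textbf{Step 3 (positivity of the tail event), the main point needing care.} I must check that $\P(\tau_{j^*}\le -M)>0$ for every $M$. Fix some $P_0\in K_+$. Consider the open set of simplex points where $\theta^\sP_{P_0}<\varepsilon$ and all other $\theta^\sP_P$ lie in $(\delta,1)$ for a fixed small $\delta$. This set is nonempty when $\varepsilon$ is small relative to $\delta$, and it is open in the simplex. On it, $\tau_{j^*}\le a_{P_0}\log\varepsilon + \sum_{P\ne P_0}|a_P|\,|\log\delta|$, which is at most $-M$ for $\varepsilon$ small enough. Because $f>0$ everywhere, this open set has positive prior probability, which completes (a).

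For (b), I apply the same argument with $K_-$ and $m_P\ge1$ on $K_-$. Now the $K_+$ terms of $\tau_{j^*}$ are nonpositive, so $\tau_{j^*}\le\sum_{P\in K_-}a_P\log\theta^\sP_P$. Hence on $\{\tau_{j^*}\ge M\}$ some $\theta^\sP_P$ with $P\in K_-$ is at most $e^{-M/|A_-|}$, where $A_- := \sum_{P\in K_-}a_P$, and the conditional expectation of the monomial on $\{\tau_{j^*}\ge M\}$ tends to $0$. The positivity of the event $\{\tau_{j^*}\ge M\}$ follows exactly as in Step 3, and the rest is as in (a).
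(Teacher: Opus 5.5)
Your proof is correct and follows essentially the same route as the paper: bound a monomial in the block probabilities $\theta^{\sP}_P$, $P\in K_+$, on the lower tail of $\tau_{j^*}$ so that its conditional expectation tends to zero while its prior expectation stays positive, then conclude via the moment characterization of \cref{lemma:plain}. You are more careful than the paper in two places: you pad the monomial to total degree $n$, which is the only place $n\ge |K_+|$ enters and which the paper leaves implicit, and you verify that the conditioning event $\{\tau_{j^*}\le -M\}$ has positive probability, a step that tacitly requires $|\sP|\ge 2$ (the paper's argument needs this too).
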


\begin{proof} 
 \begin{enumerate}[label=(\alph*)]
  \item Let $h(\theta) = \prod_{P \in K_+} \theta^{\sP}_P$ and
  $C := \max_{P \in K_+} \sum_{i \in P} c_{i,j^*}$:
  \begin{align}
  \tau_{j^*} 
  &= \log \left(\prod_{P \in \sP} 
  (\theta^{\sP}_P)^{\sum_{i \in P} c_{i,j^*}}\right) \\
  &\geq \log\left(\prod_{P \in K_+} 
  (\theta^{\sP}_P)^{\sum_{i \in P} c_{i,j^*}}\right) \\
  &\geq \log\left(h(\theta)^C\right)
 \end{align}
 Hence, $\tau_{j^*} < M$ implies that $h(\theta) < \exp(M)^{C^{-1}}$, that is,
 $\E[h(\theta)|\tau_{j^*} < M] \rightarrow 0$ as $M \rightarrow -\infty$. Also,
 since $\theta$ has full support,
 $\E[h(\theta)] > 0$. Conclude from 
 \cref{lemma:plain} that 
 $X$ is not independent of $\tau_{j^*}$.
 
 \item Let $h(\theta) = \prod_{P \in K_-} \theta^{\sP}_P$ and
  $C := \min_{P \in K_-} \sum_{i \in P} c_{i,j^*}$:
  \begin{align}
  \tau_{j^*} 
  &= \log \left(\prod_{P \in \sP} 
  (\theta^{\sP}_P)^{\sum_{i \in P} c_{i,j^*}}\right) \\
  &\leq \log\left(\prod_{P \in K_-} 
  (\theta^{\sP}_P)^{\sum_{i \in P} c_{i,j^*}}\right) \\
  &\leq \log\left(h(\theta)^C\right)
 \end{align}
 Hence, $\tau_{j^*} > M$ implies that $h(\theta) < \exp(M)^{C^{-1}}$, that is,
 $\E[h(\theta)|\tau_{j^*} > M] \rightarrow 0$ as $M \rightarrow \infty$. Also,
 since $\theta$ has full support,
 $\E[h(\theta)] > 0$. Conclude from 
 \cref{lemma:plain} that 
 $X$ is not independent of $\tau_{j^*}$.
 \end{enumerate}
\end{proof}

\begin{proof}[Proof of \cref{thm:equiv}]
 If $c \in \Re^{r \times d}$ is such that,
 for every $P \in \sP$ and $1 \leq j \leq d$,
 $\sum_{i \in P}c_{i,j} = 0$, then
 $\tau \equiv 0$ and
 it follows from \cref{lemma:log_odds} that 
 $\psi_j = \exp(\rho_j)$ is a function of $\nu$.
 It follows directly from 
 \cref{lemma:equiv} that
 $\psi$ has the same distribution in
 the unconstrained and constrained models.

 Next, assume that $c \in \Re^{r \times d}$ is such that, for some $P \in \sP$ and $1 \leq j^* \leq d$,
 $\sum_{i \in P}c_{i,j^*} \neq 0$.
 Let $\phi^u$ and $\phi^c$ denote
 characteristic functions under
 the unconstrained and constrained models.
 It follows from \cref{lemma:equiv,lemma:log_odds} that
 \begin{align}
  \phi^u_{\log\psi_{j^*}|X}(t)
  &= \phi^u_{\tau_{j^*}|X}(t) \cdot
  \phi^u_{\rho_{j^*}|X}(t), 
  \text{ and} \\
  \phi^c_{\log\psi_{j^*}|X}(t)
  &= \phi^u_{\tau_{j^*}}(t) \cdot
  \phi^u_{\rho_{j^*}|X}(t).
 \end{align}
 Since $\phi^u_{\rho_{j^*}|X}(t)$
 is continuous and
 $\{t: \phi^u_{\rho_{j^*}|X}(t) \neq 0\}$ is dense, it is sufficient to show that
 there exist $t$ and $x$ such that
 $\phi^u_{\tau_{j^*}|X}(t) \neq \phi^u_{\tau_{j^*}}(t)$,
 that is, to show that
 $\tau_j^*$ is not independent of $X$.
 The latter follows from \cref{lemma:notindep}.
\end{proof}

\end{document}